\documentclass[11pt,a4paper]{article}
\usepackage[margin=25mm]{geometry}
\usepackage[pdftex]{graphicx}

\usepackage[pdftex,usenames,dvipsnames]{xcolor}
\usepackage[utf8]{inputenc} 
\usepackage{amsfonts,amsmath,amssymb}
\usepackage{theorem}
\usepackage[english]{babel}

\parskip 4pt plus1pt minus1pt

\def\e{\varepsilon}
\jot=5pt

\def\e{\varepsilon}
\jot=5pt

\def\ol#1{\overline{#1}}

\def\R#1{\ensuremath{\mathbb{R}^{#1}}}
\def\S#1{\ensuremath{\mathbb{S}^{#1}}}

\def\<{\left<}
\def\>{\right>}
\def\e{\varepsilon}
\def\g{\gamma}

\newenvironment{proof}{\ignorespaces\par\noindent\textbf{Proof.}\quad}{\hfill$\Box$\par}

\def\CRPC{C\kern-.5pt R\kern-.5pt P\kern-.5pt C}
\def\CSkC{CSkC}

{\theorembodyfont{\itshape}
\newtheorem{theo}{Theorem}
\newtheorem{prop}[theo]{Proposition}

} {\theorembodyfont{\upshape}

\newtheorem{defi}{Definition}
\newtheorem{example}{Example}
}


\title{A property that characterizes the Enneper surface and helix surfaces}
\author{\normalsize Pascual Lucas\footnote{Corresponding author.\newline \hspace*{17pt}E-mail
addresses: plucas@um.es and yagues1974@hotmail.com}\and\normalsize
José Antonio Ortega-Yagües}
\date{\normalsize Departamento de Matemáticas, Universidad de Murcia\\
       Campus de Espinardo, 30100 Murcia SPAIN\\[10mm] \today}

\begin{document}

\maketitle

\begin{abstract}
The main goal of this paper is to show that helix surfaces and the Enneper surface are the only surfaces in the 3-dimensional Euclidean space $\R3$ whose isogonal lines are generalized helices and pseudo-geodesic lines.
\end{abstract}
\medskip

\noindent\textbf{Keywords.} generalized helix; isogonal line; pseudo-geodesic line; helix surface.

\noindent\textbf{2020 Mathematics subject classification.} Primary 53A04, 53A05.

\section{Introduction}
\label{s:intro}

Let $\gamma(s)\subseteq\R3$, $s\in I$, be an arc-length parametrized curve, with Frenet frame $\{T_\g,N_\g,B_\g\}$, satisfying the Frenet equations
\begin{equation}\label{FS-eq}
T_\gamma' = \kappa_\gamma\,N_\gamma,\qquad
N_\gamma'= -\kappa_\gamma\,T_\gamma-\tau_\gamma\,B_\gamma,\qquad
B_\gamma' = \tau_\gamma\,N_\gamma,
\end{equation}
where, as usual, $\kappa_\g>0$ and $\tau_\g$ stand for the \emph{curvature} and \emph{torsion} functions of $\g$.

When the curve $\gamma$ lies on an orientable connected surface $M\subseteq\R3$, with Gauss map $N$, then along $\gamma$ we also have the Darboux frame $\{T_\gamma,JT_\gamma,N\}$, where $J$ denotes the positively oriented $\pi/2$-rotation in the tangent plane, which is defined by $JV=N\times V$. In this case, we have the following equations 
\begin{equation}\label{skntg}
T'_\gamma= \kappa_g\,(JT_\gamma)+\kappa_n\,N,\qquad
(JT_\gamma)' = -\kappa_g\,T_\gamma -\tau_g\,N,\qquad
N' = -\kappa_n\,T_\gamma +\tau_g\,(JT_\gamma),
\end{equation}
where the functions $\kappa_g$, $\kappa_n$ and $\tau_g$ are called the \emph{geodesic curvature}, \emph{normal curvature} and \emph{geodesic torsion} of $\g$, respectively. From first equations of (\ref{FS-eq}) and (\ref{skntg}), we easily get the well-known relation $\kappa_\gamma^2 =\kappa^2_g+\kappa^2_n$.

Let $\{E_1,E_2\}$ be a local orthonormal frame of principal directions of $M$, which is positively oriented, associated to the principal curvatures $\kappa_1$ and $\kappa_2$, with $\kappa_1\leq\kappa_2$. The orthonormal frames  $\{T_\g,N_\g,B_\g\}$ and $\{E_1,E_2,N\}$ in $\R3$ are related along $\gamma$ by
\begin{align}\label{refdarboux}
 T_\gamma= & \cos\phi\,E_1+\sin\phi\,E_2,\nonumber\\
 N_\gamma= & \sin\theta\,(-\sin\phi\,E_1+\cos\phi\,E_2)+\cos\theta N,\\
 B_\gamma= & -\cos\theta\,(-\sin\phi\,E_1+\cos\phi\,E_2)+\sin\theta N,\nonumber
\end{align}
for certain differentiable angle functions $\phi$ and $\theta$. From the equations (\ref{FS-eq})--(\ref{refdarboux}) it is not difficult to see that
\begin{equation}\label{kgkn}
\kappa_g =\sin\theta\,\kappa_\gamma,\quad\kappa_n =\cos\theta\,\kappa_\gamma.
\end{equation}
The following relations are also well-known (see \cite[pp. 145, 153]{DoCarmo76}):
\begin{equation}\label{kntg}
\kappa_n=\kappa_1\cos^2\phi+\kappa_2\sin^2\phi,\qquad
\tau_g=(\kappa_1-\kappa_2)\cos\phi\sin\phi.
\end{equation}

We recall the following definition introduced by Santaló in \cite{Santalo43}.
\begin{defi}
A curve $\gamma$ in a surface $M$ is said to be an \emph{isogonal line in $M$} if the angle $\phi$ between $T_\g$ and $E_1$ is constant along $\g$.
\end{defi}
The lines of curvature and the loxodromes are trivial examples of isogonal lines. Note that any reparametrization of an isogonal line is also an isogonal line with the same angle. Therefore, and without loss of generality, we can assume that our isogonal lines have constant velocity.

Geodesics $\g$ on a surface $M$ are characterized by having their principal normal vector field $N_\gamma$ parallel to the unit vector field $N$ normal to the surface. This property was extended in a very natural way by Wunderlich in \cite{W49a,W49b,W50,W51}.
He introduced the concept of pseudo-geodesic lines on a surface as the curves traced on the surface whose osculating planes form a fixed angle with the tangent plane of the surface; when this angle is a right angle, we get the proper geodesics and when it is equal to zero, the asymptotic lines. This property is equivalent to the following.
\begin{defi}
A curve $\g$ in a surface $M$ is said to be a \emph{pseudo-geodesic line in $M$} if the angle $\theta$ between the normals $N_\gamma$ and $N$ is constant along $\gamma$.
\end{defi}
It is easy to see that every curve in a plane is a pseudo-geodesic line in that plane, and for curves $\g$ in the 2-sphere $\S2(r)$, $\g$ is a pseudo-geodesic line if and only it is a circle.
A link between pseudo-geodesic lines and helices has recently been found in \cite{LO23}, where the generalized helix concept has been extended. It is shown in \cite{LO23} that normal helices are precisely the pseudo-geodesic lines in a generalized cylinder.

Along this paper we will use the usual concept of generalized helix.
\begin{defi}
A curve $\gamma$ is a \emph{generalized helix}, or \emph{cylindrical helix}, if its tangent vector makes a constant angle with a fixed direction $v$, called an axis of the generalized helix.
\end{defi}
It is well-known (see, for example, \cite[p. 20]{Eis09} or \cite[p. 26]{DoCarmo76}) that $\gamma$ is a generalized helix if and only if $m\kappa_\gamma+n\tau_\gamma=0$, for certain constants $m$ and $n$.

This paper is organized as follows. In Sect. \ref{s:CCS} we obtain some results on isogonal lines, and their relation to several special families of surfaces (\emph{\CRPC--surfaces} and \emph{\CSkC--surfaces}). In Sect. \ref{s:CCNS} we study pseudo-geodesic lines and obtain the system of differential equations that characterizes them, which obviously is very similar to the system that characterizes the geodesic curves. In that section we also obtain a result analogous to the well-known Joachimsthal theorem (see Proposition \ref{JoachimsthalEx2}). Sect. \ref{s:MR} is devoted to prove the main result of the paper (see Theorem \ref{MR}): \emph{Let $M$ be a nonplanar connected surface in $\R3$. Then the isogonal lines are pseudo-geodesic lines and generalized helices if and only if $M$ is a helix surface or an open piece of the Enneper surface}. We end the paper by providing a family of curves on the Enneper surface with nice properties.

The authors would like to thank the reviewer for his comments and suggestions that have improved the original manuscript.

\section{Some results on isogonal lines}
\label{s:CCS}\label{s:NIL}

Our first goal in this section is to prove that given a point $p$ in a surface $M$ and a tangent vector $v\in T_pM$, then there exists a unique isogonal line with initial conditions $(p,v)$.

Let $X(t,z)$ be an orthogonal parametrization in a surface $M$, and consider $\g(s)=X(t(s),z(s))$ an isogonal line with angle $\phi$ satisfying $\g(0)=p$ and $\g'(0)=v$. Then its tangent vector is given by
\begin{equation}\label{eq1}
\g'(s)=t'(s)X_t(t(s),z(s))+z'(s)X_z(t(s),z(s)).
\end{equation}
Let us write
\begin{equation}\label{eq2}
X_t(t,z)=f_1(t,z)E_1(t,z)+f_2(t,z)E_2(t,z),\qquad
X_z(t,z)=g_1(t,z)E_1(t,z)+g_2(t,z)E_2(t,z),
\end{equation}
where $f_1,f_2,g_1,g_2$ are differentiable functions. Equations (\ref{eq1}) and (\ref{eq2}) leads to
\begin{equation}\label{sedo-cpc}
t'(s)f_1(s)+z'(s)g_1(s)=|v|\cos\phi,\qquad t'(s)f_2(s)+z'(s)g_2(s)=|v|\sin\phi,
\end{equation}
where we have written, for simplicity, $f_1(s)=f_1(t(s),z(s))$, and so on. It is not difficult to see that the differential equations (\ref{sedo-cpc}) characterize the  isogonal lines. So we have proved the following result.

\begin{prop}\label{p1.a}
Given a point $p\in M$ and a vector $v\in T_pM$, $v\neq0$, there exist an $\e>0$ and a unique  isogonal line $\gamma:(-\e,\e)\to M$ such that $\g(0)=p$, $\g'(0)=v$ and $|\g'(t)|=|v|$ for all $t\in(-\e,\e)$.
\end{prop}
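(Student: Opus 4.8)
The plan is to read (\ref{sedo-cpc}) not as a constraint but as a determined first-order system of ordinary differential equations for the coordinate functions $t(s),z(s)$, and then invoke the standard existence and uniqueness theorem. First I would fix coordinates $(t_0,z_0)$ with $X(t_0,z_0)=p$, and record that $v\in T_pM$ admits a unique expression $v=|v|\cos\phi\,E_1(p)+|v|\sin\phi\,E_2(p)$; this pins down the constant angle $\phi$ and the constant speed $|v|$ that sit on the right-hand sides of (\ref{sedo-cpc}). The point to observe is that, with $\phi$ and $|v|$ now frozen, (\ref{sedo-cpc}) is a linear algebraic system in the two unknowns $t'(s),z'(s)$ whose coefficients are functions of the current position $(t(s),z(s))$ alone.

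Next I would show that the coefficient matrix of (\ref{sedo-cpc}), namely $\left(\begin{smallmatrix} f_1 & g_1 \\ f_2 & g_2 \end{smallmatrix}\right)$, is invertible throughout the coordinate neighbourhood. Indeed, from (\ref{eq2}) and $E_1\times E_2=N$ one computes $X_t\times X_z=(f_1g_2-f_2g_1)\,N$, and regularity of the orthogonal parametrization $X$ forces $X_t\times X_z\neq0$, so the determinant $f_1g_2-f_2g_1$ vanishes nowhere. Solving (\ref{sedo-cpc}) for the derivatives by Cramer's rule then yields an explicit system $t'(s)=F_1(t(s),z(s))$, $z'(s)=F_2(t(s),z(s))$, where $F_1,F_2$ are smooth near $(t_0,z_0)$ because the $f_i,g_i$ are differentiable and the determinant is bounded away from zero there.

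With the system in this normal form, the Picard--Lindelöf theorem supplies an $\e>0$ and a unique solution $(t(s),z(s))$ on $(-\e,\e)$ with $(t(0),z(0))=(t_0,z_0)$. Setting $\g(s)=X(t(s),z(s))$ and reversing the algebra, one gets $\g'=|v|\cos\phi\,E_1+|v|\sin\phi\,E_2$ along $\g$, whence $|\g'|\equiv|v|$ and the angle between $T_\g$ and $E_1$ is the constant $\phi$, so $\g$ is an isogonal line with $\g(0)=p$ and $\g'(0)=v$ by the choice of $\phi$; uniqueness of $\g$ is inherited directly from uniqueness of the ODE solution. The only steps deserving care are the invertibility of the coefficient matrix --- which is precisely where regularity of $X$ (and smoothness of the principal frame) enters --- and selecting the correct determination of $\phi$ so that the initial velocity is exactly $v$ rather than merely a vector of the right length making angle $\phi$ with $E_1$. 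Beyond that I would not expect any genuine obstacle, the argument being a routine application of the existence--uniqueness theorem once (\ref{sedo-cpc}) is recognized as a well-posed initial value problem.
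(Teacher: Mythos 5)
Your argument is correct and follows essentially the same route as the paper, which derives the first-order system (\ref{sedo-cpc}) in an orthogonal parametrization and then asserts existence and uniqueness; you simply make explicit the two details the paper leaves to the reader (invertibility of the coefficient matrix $\bigl(\begin{smallmatrix} f_1 & g_1 \\ f_2 & g_2 \end{smallmatrix}\bigr)$ via regularity of $X$, and the appeal to Picard--Lindel\"of after passing to normal form). No changes needed.
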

To indicate the dependence of this isogonal line of $(p,v)$, it is convenient to denote it by $\g(t,p,v)$.

\begin{prop}\label{p1.b}
If the  isogonal line $\g(t,p,v)$ is defined for $t\in(-\e,\e)$, then the isogonal line $\g(t,p,\lambda v)$, $\lambda\in\R{}$, $\lambda\neq0$, is defined for $t\in(-\e/\lambda,\e/\lambda)$, and $\g(t,p,\lambda v)=\g(\lambda t,p,v)$.
\end{prop}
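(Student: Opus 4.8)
The plan is to reduce everything to the uniqueness statement already established in Proposition \ref{p1.a}. I would set $\beta(s)=\g(\lambda s,p,v)$, defined for those $s$ with $\lambda s\in(-\e,\e)$, that is, for $s$ in the open interval with endpoints $-\e/\lambda$ and $\e/\lambda$. The goal is then to verify that $\beta$ is itself an isogonal line carrying the initial data associated to $(p,\lambda v)$, so that $\beta=\g(\cdot,p,\lambda v)$ by uniqueness, which is exactly the asserted identity.

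First I would record the initial conditions. Differentiating the definition gives $\beta'(s)=\lambda\,\g'(\lambda s,p,v)$, whence $\beta(0)=\g(0,p,v)=p$ and $\beta'(0)=\lambda\,\g'(0,p,v)=\lambda v$. Moreover, since $|\g'(\cdot,p,v)|\equiv|v|$ by Proposition \ref{p1.a}, we obtain $|\beta'(s)|=|\lambda|\,|v|=|\lambda v|$, so $\beta$ has the constant speed $|\lambda v|$ required of $\g(\cdot,p,\lambda v)$.

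Next I would check the isogonal condition. From $\beta'(s)=\lambda\,\g'(\lambda s,p,v)$ the unit tangent satisfies $T_\beta(s)=\sgn(\lambda)\,T_\g(\lambda s,p,v)$. Since $\g(\cdot,p,v)$ is isogonal, the angle between $T_\g$ and $E_1$ is a constant $\phi$; hence the angle between $T_\beta$ and $E_1$ equals $\phi$ when $\lambda>0$ and $\phi+\pi$ when $\lambda<0$, and in either case it is constant along $\beta$, so $\beta$ is an isogonal line. Equivalently, one can argue at the level of the characterizing system (\ref{sedo-cpc}): writing $\beta(s)=X(t(\lambda s),z(\lambda s))$ and applying the chain rule, the left-hand sides of (\ref{sedo-cpc}) for $\beta$ equal $\lambda$ times those for $\g(\cdot,p,v)$, and a short sign check shows the right-hand sides become $\lambda|v|\cos\phi$ and $\lambda|v|\sin\phi$, i.e. exactly the constants attached to $(p,\lambda v)$.

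Having shown that $\beta$ is an isogonal line with $\beta(0)=p$, $\beta'(0)=\lambda v$ and constant speed $|\lambda v|$, the uniqueness part of Proposition \ref{p1.a} forces $\beta=\g(\cdot,p,\lambda v)$, which is the claimed $\g(t,p,\lambda v)=\g(\lambda t,p,v)$; the stated domain follows directly from the constraint $\lambda s\in(-\e,\e)$. I expect the only delicate point to be the case $\lambda<0$, where the tangent direction reverses and the isogonal angle is shifted by $\pi$; since the definition of isogonal line demands only that this angle be constant, the shift is harmless, and it constitutes the sole (and minor) obstacle in the argument.
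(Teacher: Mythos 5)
Your proposal is correct and follows essentially the same route as the paper: define $\alpha(t)=\g(\lambda t,p,v)$, check that it is an isogonal line with the initial data $(p,\lambda v)$ and constant speed $|\lambda v|$, and conclude by the uniqueness in Proposition \ref{p1.a}. The only difference is that you spell out the sign check for $\lambda<0$ (the angle shifting by $\pi$), which the paper leaves implicit.
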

\begin{proof}
Let $\alpha:(-\e/\lambda,\e/\lambda)\to M$ be the parametrized curve defined by $\alpha(t)=\g(\lambda t,p,v)$. Then $\alpha(t)$ is an isogonal line such that $\alpha(0)=p$, $\alpha'(0)=\lambda v$ and $|\alpha'(t)|=|\lambda v|$ for all $t$, and so from Proposition \ref{p1.a} we have $\alpha(t)=\g(t,p,\lambda v)$.
\end{proof}

If $v\in T_pM$, $v\neq0$, is such that $\g(1,p,v)$ is defined, we set $\Phi_p(v)=\g(1,p,v)$ and $\Phi_p(0)=p$. Then we can define a differentiable map $\Phi_p$ in some neighborhood of $0\in T_pM$. This map will be called the \emph{isogonal map} at $p$. More precisely,

\begin{prop}\label{p1.c}
Given a point $p\in M$ there exists an $\e>0$ such that $\Phi_p$ is defined and differentiable in the interior $B_\e$ of a disk of $T_pM$, with center in the origin and of radius $\e$.
\end{prop}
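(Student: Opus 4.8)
The plan is to follow the classical argument showing that the exponential map of a Riemannian surface is defined and differentiable on a ball about the origin of the tangent plane, with the isogonal map $\Phi_p$ playing the role of $\exp_p$. First I would rewrite the defining system (\ref{sedo-cpc}) in normal form. Since $X$ is an orthogonal (in particular regular) parametrization, the coefficient matrix of (\ref{sedo-cpc}) has determinant $f_1g_2-f_2g_1=\langle X_t\times X_z,N\rangle\neq0$, so one may solve for $(t',z')$. Writing $v=v_1E_1(p)+v_2E_2(p)$ and using that along an isogonal line the velocity has constant length and constant angle with the principal frame — so that its components $(v_1,v_2)$ in $\{E_1,E_2\}$ stay constant and agree with those of $v$ — the system takes the form
\begin{equation*}
t'=\frac{v_1g_2-v_2g_1}{f_1g_2-f_2g_1},\qquad z'=\frac{v_2f_1-v_1f_2}{f_1g_2-f_2g_1},
\end{equation*}
a first-order system whose right-hand side is differentiable jointly in the unknowns $(t,z)$ and in the parameter $v=(v_1,v_2)\in T_pM$.

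The decisive feature of this formulation is that the right-hand side is differentiable up to and including $v=0$, where it vanishes identically; hence the constant curve $\g(t,p,0)\equiv p$ is the unique solution for the initial datum $v=0$, consistently with the convention $\Phi_p(0)=p$. I would then invoke the theorem on existence, uniqueness and differentiable dependence of solutions of ordinary differential equations on initial conditions and parameters. Keeping the base point $p$ fixed and treating $v$ as the parameter, this guarantees that the domain $\mathcal D\subseteq\R{}\times T_pM$ of the isogonal flow $(t,v)\mapsto\g(t,p,v)$ is open and that the flow is differentiable on $\mathcal D$. Because the solution for $v=0$ exists for all $t$, the set $\mathcal D$ contains $\R{}\times\{0\}$, and in particular the compact segment $[0,1]\times\{0\}$.

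Finally, since $[0,1]\times\{0\}$ is compact and sits inside the open set $\mathcal D$, a tube-lemma argument yields an $\e>0$ with $[0,1]\times B_\e\subseteq\mathcal D$. Consequently $\g(1,p,v)$ is defined for every $v\in B_\e$, and $\Phi_p(v)=\g(1,p,v)$ is differentiable there, being the composition of the differentiable flow with the map $v\mapsto(1,v)$. I expect the main obstacle to be differentiability at the origin, where the angle $\phi$ is undefined; this is exactly what the reformulation in terms of the smoothly varying velocity components $(v_1,v_2)$ resolves, turning $0\in T_pM$ into an ordinary interior point of the parameter domain rather than a singular one. For mere existence on a ball one could instead combine the homogeneity relation $\g(t,p,\lambda v)=\g(\lambda t,p,v)$ of Proposition \ref{p1.b} with the compactness of the unit circle of $T_pM$, but the flow-domain argument supplies differentiability at $0$ at no additional cost.
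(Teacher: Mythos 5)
Your argument is correct, and it diverges from the paper's proof at the decisive step. The paper (following do Carmo's proof for the exponential map almost verbatim) first asserts, as a black box, a local uniform existence-and-differentiability statement for the isogonal flow — a map $\g:(-\e_2,\e_2)\times B_{\e_1}\to M$ — and then uses the homogeneity relation of Proposition \ref{p1.b} with $\lambda=\e_2/2$ to trade radius for time, so that $\g(1,p,v)$ becomes defined for $|v|<\e_1\e_2/2$. You instead make the parameter-dependence theorem concrete by putting (\ref{sedo-cpc}) into normal form via Cramer's rule (the determinant $f_1g_2-f_2g_1=\langle X_t\times X_z,N\rangle\neq0$ is exactly the right justification), observe that the right-hand side is smooth in $v$ through $v=0$ where the constant solution is global, and then get a uniform $\e$ with $[0,1]\times B_\e$ inside the open flow domain by the tube lemma — no rescaling needed. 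Each route buys something: the paper's rescaling argument needs only short-time existence near $v=0$ and reuses Proposition \ref{p1.b}, while your normal-form computation actually supplies the proof of the dependence theorem that the paper only states, and in particular makes the differentiability of $\Phi_p$ \emph{at the origin} transparent (the angle $\phi$ drops out in favour of the linear parameters $(v_1,v_2)$), which the paper leaves implicit. Your closing remark correctly identifies the homogeneity-plus-compactness alternative as what the paper in fact does. The only caveat, shared with the paper, is that the frame $\{E_1,E_2\}$ of principal directions must be assumed smooth on the coordinate patch (i.e.\ one works away from umbilics), since the whole system (\ref{sedo-cpc}) presupposes it.
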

\begin{proof}
It is clear that for every direction of $T_pM$ it is possible, by Proposition \ref{p1.b}, to take $v$ sufficiently small so that the interval of definition of $\g(t,p,v)$ contains 1, and thus $\g(1,p,v)=\Phi_p(v)$ is defined. To show that this reduction can be made uniformly in all directions, we need the theorem of the dependence of an  isogonal line on its initial conditions, in the following form: \emph{Given $p\in M$ there exist numbers $\e_1>0$, $\e_2>0$ and a differentiable map $\g:(-\e_2,\e_2)\times B_{\e_1}\to M$ such that, for every $v\in B_{\e_1}\subset T_pM$, $v\neq0$, $t\in(-\e_2,\e_2)$, the curve $\alpha(t)=\g(t,p,v)$ is the  isogonal line in $M$ with $\alpha(0)=p$, $\alpha'(0)=v$ and $|\alpha'(t)|=|v|$; and for $v=0$, $\g(t,p,0)=p$}.\\
From this statement and Proposition \ref{p1.b}, our assertion follows. In fact, since $\g(t,p,v)$ is defined for $|t|<\e_2$, $|v|<\e_1$, we obtain, by setting $\lambda=\e_2/2$ in Proposition \ref{p1.b}, that $\g(t,p,(\e_2/2)v)$ is defined for $|t|<2$, $|v|<\e_1$. Therefore, by taking a disk $B_{\e_1}\subset T_pM$, with center at the origin and radius $\e<\e_1\e_2/2$, we have that $\g(1,p,v)=\Phi_p(v)$, $v\in B_{\e_1}$, is defined. The differentiability of $\Phi_p$ in $B_{\e_1}$ follows from the differentiability of $\g$.
\end{proof}
The differentiable map $\g(t,p,v)$ will be called the \emph{isogonal flow}.

\begin{prop}\label{p1.d}
Given a point $p\in M$, the map $\Phi_p:B_\e\subset T_pM\to M$ is a diffeomorphism in a neighborhood $U\subset B_\e$ of the origin $0$ of $T_pM$.
\end{prop}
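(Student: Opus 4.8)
The plan is to apply the inverse function theorem to $\Phi_p$, which by Proposition \ref{p1.c} is defined and differentiable on $B_\e\subset T_pM$. Since $\Phi_p$ maps (an open subset of) the vector space $T_pM$ into the surface $M$, and $\Phi_p(0)=p$, it suffices to show that its differential at the origin, $(d\Phi_p)_0\colon T_pM\to T_pM$ (where we identify the tangent space of the vector space $T_pM$ at $0$ with $T_pM$ itself), is a linear isomorphism. In fact, I expect it to be exactly the identity map, which will make the application of the inverse function theorem immediate.

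To compute $(d\Phi_p)_0(v)$ for a fixed $v\in T_pM$, I would take the curve $t\mapsto tv$ in $T_pM$, which passes through $0$ with velocity $v$, and differentiate $\Phi_p$ along it. By the definition of the isogonal map we have $\Phi_p(tv)=\g(1,p,tv)$, and the scaling identity of Proposition \ref{p1.b} (taking $\lambda=t$) gives $\g(1,p,tv)=\g(t,p,v)$. Therefore
\begin{equation*}
(d\Phi_p)_0(v)=\left.\frac{\d}{\d t}\right|_{t=0}\Phi_p(tv)=\left.\frac{\d}{\d t}\right|_{t=0}\g(t,p,v)=v,
\end{equation*}
where the last equality uses that the isogonal line $\g(t,p,v)$ has initial velocity $v$ at $t=0$. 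Hence $(d\Phi_p)_0=\mathrm{id}_{T_pM}$, which is nonsingular.

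With $\Phi_p$ differentiable and $(d\Phi_p)_0$ invertible, the inverse function theorem provides a neighborhood $U\subset B_\e$ of $0\in T_pM$ on which $\Phi_p$ restricts to a diffeomorphism onto its image, an open neighborhood of $p=\Phi_p(0)$ in $M$, which is precisely the assertion. The only point requiring genuine work is the differentiability of $\Phi_p$ needed to invoke the inverse function theorem, but this has already been secured in Proposition \ref{p1.c}; the differential computation itself is then a direct consequence of the homogeneity relation of Proposition \ref{p1.b}, exactly mirroring the classical argument that the exponential map is a local diffeomorphism at the origin (see, e.g., \cite{DoCarmo76}).
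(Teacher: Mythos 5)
Your proposal is correct and follows essentially the same argument as the paper: both compute $d(\Phi_p)_0(v)$ along the curve $t\mapsto tv$, use the homogeneity relation of Proposition \ref{p1.b} to identify $\Phi_p(tv)$ with $\g(t,p,v)$, conclude that the differential is the identity, and invoke the inverse function theorem. Your write-up is, if anything, slightly more explicit than the paper's about where Proposition \ref{p1.b} enters.
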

\begin{proof}
We shall show that the differential $d(\Phi_p)_0$ is nonsingular at $0\in T_pM$. To do this, we identify as usual the space of tangent vectors to $T_pM$ at $0$ with $T_pM$ itself. Then
\[
d(\Phi_p)_0(v)=\left.\frac{d}{dt}\right|_{t=0}\Phi_p(tv)=\left.\frac{d}{dt}\right|_{t=0}\g(t,p,v)=v.
\]
It follows that $d(\Phi_p)_0$ is the identity map. By applying the inverse function theorem, we complete the proof of the proposition.
\end{proof}
\medskip

A surface $M$ with \emph{constant ratio of principal curvatures} or \emph{\CRPC--surface} is a surface such that $a\kappa_1+b\kappa_2=0$ for certain constants $a,b\in\R{}$ with $a^2+b^2\neq0$; if we assume $\kappa_1\neq0$, then a \emph{\CRPC--surface} is a surface with constant $\kappa_2/\kappa_1$, \cite{JMP20,WP22}. In other words, a \emph{\CRPC--surface} is a surface with linearly dependent functions $\kappa_1$ and $\kappa_2$. Trivial examples are spheres and minimal surfaces. Another known case is an ideal \emph{Mylar balloon} (see, e.g., \cite{MO03,MO07}) which is obtained by gluing two equally sized discs of ﬂexible, but inextensible, foil along their common border, and blowing it up. Further surfaces of revolution of that sort, with positive and negative ratios of principal curvatures, have appeared in other contexts, e.g., in \cite{Hopf, Kuhnel06, JMP20}.

\begin{prop}\label{p1.1}%
Let $\gamma$ be an arclength parametrized curve in a surface $M$. If $\gamma$ is an isogonal line (which is not a line of curvature), then the following conditions are equivalent:\vspace*{-\topsep}
\begin{enumerate}\itemsep0pt\def\labelenumi{\roman{enumi})}
\item[C1)]
$\kappa_1$ and $\kappa_2$ are linearly dependent functions along $\gamma$.
\item[C2)]
$\tau_g$ and $\kappa_n$ are linearly dependent functions along $\gamma$.
\end{enumerate}
\end{prop}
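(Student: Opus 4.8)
The plan is to exploit that along an isogonal line the angle $\phi$ is constant, so the two relations in (\ref{kntg}) express $\kappa_n$ and $\tau_g$ as \emph{constant-coefficient} linear combinations of the principal curvatures $\kappa_1$ and $\kappa_2$. Writing $c=\cos^2\phi$, $s=\sin^2\phi$ and $p=\cos\phi\sin\phi$, all constant along $\gamma$, equation (\ref{kntg}) becomes the matrix identity
\[
\begin{pmatrix}\kappa_n\\\tau_g\end{pmatrix}
=A\begin{pmatrix}\kappa_1\\\kappa_2\end{pmatrix},
\qquad
A=\begin{pmatrix} c & s\\ p & -p\end{pmatrix},
\]
and the key observation is that $\det A=-p(c+s)=-\cos\phi\sin\phi$.

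First I would check that the hypothesis that $\gamma$ is not a line of curvature is exactly what guarantees $\det A\neq0$. By (\ref{refdarboux}), $T_\gamma=\cos\phi\,E_1+\sin\phi\,E_2$, so $\gamma$ is a line of curvature (tangent to a principal direction) precisely when $\sin\phi=0$ or $\cos\phi=0$, that is, when $\cos\phi\sin\phi=0$; conversely $\cos\phi\sin\phi\neq0$ forces $T_\gamma$ to differ from both $E_1$ and $E_2$. Since $\phi$ is constant, our hypothesis therefore yields $p=\cos\phi\sin\phi\neq0$, so $A$ is invertible.

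Finally, since $A$ is an invertible linear transformation with constant coefficients carrying $(\kappa_1,\kappa_2)$ to $(\kappa_n,\tau_g)$, it preserves linear dependence: for any constants $(a,b)$ the combination $a\kappa_n+b\tau_g$ equals $a'\kappa_1+b'\kappa_2$ with $(a',b')=(a,b)A$, and $(a,b)\mapsto(a',b')$ is a bijection of $\R2\setminus\{0\}$ because $A$ is invertible. Hence a nontrivial constant relation among $\kappa_n$ and $\tau_g$ exists if and only if one exists among $\kappa_1$ and $\kappa_2$, which is exactly the equivalence C1)$\Leftrightarrow$C2). I do not expect any substantial obstacle here; the only point requiring care is translating ``not a line of curvature'' into $\cos\phi\sin\phi\neq0$, since that nondegeneracy is precisely what makes the change of basis $A$ invertible and thus lets linear dependence pass back and forth between the two pairs.
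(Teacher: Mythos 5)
Your proof is correct and follows essentially the same route as the paper: both rest on the fact that, since $\phi$ is constant along an isogonal line, equations (\ref{kntg}) give a constant-coefficient linear change of variables between $(\kappa_1,\kappa_2)$ and $(\kappa_n,\tau_g)$, invertible precisely because $\cos\phi\sin\phi\neq0$ when $\gamma$ is not a line of curvature. The paper writes the two directions out as the explicit identities (\ref{(1)}) and (\ref{(2)}) instead of computing $\det A$, but the content is identical; your determinant formulation merely makes the role of the non-line-of-curvature hypothesis more transparent.
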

\begin{proof}
From equations (\ref{kntg}), it is easy to see that, for any constants $a,b,c,d$ and $\phi$, we have
\begin{align}
\sin\phi\cos\phi(a+b)\kappa_n+(a\sin^2\phi-b\cos^2\phi)\tau_g &= \sin\phi\cos\phi(a\kappa_1+b\kappa_2).\label{(1)}\\
(d\cos\phi+c\sin\phi)\cos\phi\kappa_1+(d\sin\phi-c\cos\phi)\sin\phi\kappa_2 &= c\tau_g+d\kappa_n.\label{(2)}
\end{align}
From these equations, it is not difficult to see that \emph{C1} and \emph{C2} are equivalent conditions.
\end{proof}

\begin{prop}\label{p1.2}
Let $\gamma$ be an arc-length parametrized curve in a surface $M$, $\g$ without umbilical points. If $\g$ satisfies conditions C1 and C2 of Proposition \ref{p1.1}, then $\g$ is an isogonal line.
\end{prop}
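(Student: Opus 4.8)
The plan is to reduce the statement to proving that the angle function $\phi$ between $T_\gamma$ and $E_1$ is constant along $\gamma$, since by definition this is precisely what makes $\gamma$ an isogonal line. Because $\gamma$ has no umbilical points we have $\kappa_1-\kappa_2\neq0$ everywhere along $\gamma$, so the principal frame $\{E_1,E_2\}$ and the angle $\phi$ are well defined and differentiable along the whole curve. The only ingredients needed are the two relations in (\ref{kntg}), which write $\kappa_n$ and $\tau_g$ in terms of $\kappa_1,\kappa_2$ and $\phi$, together with the hypotheses C1 and C2, each of which provides a nontrivial linear relation with constant coefficients.

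First I would use C1, written as $a\kappa_1+b\kappa_2=0$ with $(a,b)\neq(0,0)$. Regrouping in terms of $\kappa_1+\kappa_2$ and $\kappa_1-\kappa_2$ and using $\kappa_1-\kappa_2\neq0$ forces $a+b\neq0$ and yields $\kappa_1+\kappa_2=\lambda(\kappa_1-\kappa_2)$ for a fixed constant $\lambda$. Substituting this into (\ref{kntg}) and passing to double angles rewrites the data along $\gamma$ as $\kappa_n=D(\lambda+\cos2\phi)$ and $\tau_g=D\sin2\phi$, where $D=\tfrac12(\kappa_1-\kappa_2)$ never vanishes. The effect of this step is that C1 collapses $\kappa_n$ and $\tau_g$ into expressions governed, up to the nowhere-zero factor $D$, by the single varying quantity $\phi$.

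Next I would bring in C2, written as $c\tau_g+d\kappa_n=0$ with $(c,d)\neq(0,0)$. Inserting the expressions from the previous step and cancelling the nonzero factor $D$ gives, along $\gamma$,
$$c\sin2\phi+d\cos2\phi=-d\lambda ,$$
that is, a fixed sinusoidal combination of $2\phi$ that is forced to be constant.

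The main obstacle, and the only point that is not pure algebra, is to deduce that $\phi$ is constant from this last identity, since an equation of the form $\sin(2\phi+\psi)=\mathrm{const}$ does not determine $\phi$ pointwise. Here I would argue by continuity: writing the left-hand side as $\sqrt{c^2+d^2}\,\sin(2\phi+\psi)$ for a fixed phase $\psi$, with $\sqrt{c^2+d^2}>0$ because $(c,d)\neq(0,0)$, the continuous function $s\mapsto\sin(2\phi(s)+\psi)$ is constant; since the level sets of $\sin$ are discrete and the interval of definition of $\gamma$ is connected, $2\phi+\psi$ must itself be constant, hence so is $\phi$, and $\gamma$ is an isogonal line. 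This single argument also absorbs the degenerate case $d=0$, in which the identity reduces to $\sin2\phi=0$ and $\gamma$ is a line of curvature, a trivial isogonal line. I expect the two algebraic reductions to be routine, with essentially all the content residing in this final continuity and connectedness step.
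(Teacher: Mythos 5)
Your proof is correct and takes essentially the same route as the paper, whose one-line proof invokes its identity expressing $c\tau_g+d\kappa_n$ as a linear combination of $\kappa_1,\kappa_2$ with $\phi$-dependent coefficients: in both cases C1 and C2 are combined to eliminate the curvatures and leave a constant-coefficient trigonometric equation in $\phi$, which forces $\phi$ to be constant. Your write-up merely fills in the details the paper leaves implicit, in particular the final continuity-plus-discreteness step needed to pass from $\sin(2\phi+\psi)=\mathrm{const}$ to $\phi=\mathrm{const}$.
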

\begin{proof}
It is an easy consequence of the equation (\ref{(2)}).
\end{proof}

A surface $M$ is said to be of \emph{constant skew curvature} or \emph{\CSkC--surface} if the difference of principal curvatures $\kappa_1-\kappa_2=\lambda$ is a constant, \cite{TP17}. Note that this is equivalent with $H^2-K=\lambda^2/4$ being constant, where $H$ is the mean curvature and $K$ is the Gaussian curvature of the surface.

\begin{prop}\label{prop3}
Let $M$ be a connected surface in $\R3$ without umbilic points. Then\vspace*{-\topsep}
\begin{enumerate}\itemsep0pt\def\labelenumi{$\alph{enumi})$}
\item $M$ is a \CRPC--surface if and only if $\tau_g$ and $\kappa_n$ are linearly dependent functions along the  isogonal lines.
\item $M$ is a \CSkC--surface if and only if $\tau_g$ is constant along the isogonal lines.
\end{enumerate}
\end{prop}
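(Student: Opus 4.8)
The plan is to prove each of the two equivalences by splitting it into its two implications, using in an essential way the pointwise identity $\tau_g=(\kappa_1-\kappa_2)\cos\phi\,\sin\phi$ from (\ref{kntg}) together with the fact that the angle $\phi$ is \emph{constant} along an isogonal line. Since $M$ has no umbilic points, $\kappa_1<\kappa_2$ everywhere, so $(\kappa_1,\kappa_2)\neq(0,0)$ at every point; I shall use this repeatedly. The two ``only if'' directions are immediate: if $M$ is a \CRPC--surface then $a\kappa_1+b\kappa_2=0$ on $M$, so condition C1 of Proposition \ref{p1.1} holds along every isogonal line, whence C2 (linear dependence of $\tau_g$ and $\kappa_n$) holds along every isogonal line that is not a line of curvature, while along a line of curvature $\phi\in\{0,\pi/2\}$ forces $\tau_g\equiv0$ and the dependence is trivial; and if $M$ is a \CSkC--surface, say $\kappa_1-\kappa_2=\lambda$, then along any isogonal line $\tau_g=\lambda\cos\phi\,\sin\phi$ is constant because $\phi$ is.

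For the two ``if'' directions the first step is to reduce the hypothesis to a statement holding \emph{along each} isogonal line. In case a), applying Proposition \ref{p1.1} to any isogonal line that is not a line of curvature converts linear dependence of $\tau_g$ and $\kappa_n$ (C2) into linear dependence of $\kappa_1$ and $\kappa_2$ (C1); since $(\kappa_1,\kappa_2)$ never vanishes, this means that the projective ratio $[\kappa_1:\kappa_2]$ is constant along the curve. In case b), along any non-principal isogonal line one simply divides the constant $\tau_g$ by the nonzero constant $\cos\phi\,\sin\phi$ to conclude that $\kappa_1-\kappa_2$ is constant along the curve. In both cases the constant may a priori depend on the chosen isogonal line.

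The decisive step is then to globalize, and this is where the isogonal flow of Propositions \ref{p1.a}--\ref{p1.d} enters. Fix $p\in M$. By Proposition \ref{p1.d} the isogonal map $\Phi_p$ is a diffeomorphism of a neighborhood of $0\in T_pM$ onto a neighborhood $U$ of $p$, so the isogonal lines issuing from $p$ sweep out $U$; and since every such line passes through $p$, the value along it of the relevant quantity (the ratio $[\kappa_1:\kappa_2]$ in case a), the difference $\kappa_1-\kappa_2$ in case b)) must equal its value at $p$. As the non-principal directions omit only the two principal lines in $T_pM$, their images fill an open dense subset of $U$, and continuity of $\kappa_1,\kappa_2$ then forces the quantity to equal its value at $p$ throughout $U$. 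Hence the quantity is locally constant, and by connectedness of $M$ it is constant on all of $M$: this yields a global relation $a\kappa_1+b\kappa_2=0$ in case a) (so $M$ is a \CRPC--surface) and $\kappa_1-\kappa_2=\lambda$ in case b) (so $M$ is a \CSkC--surface).

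I expect the globalization to be the main obstacle, precisely because ``constant along each isogonal line'' permits the constant to vary from line to line. The point that resolves it is that all isogonal lines through a common point are forced to share the value at that point, so that Proposition \ref{p1.d} upgrades pointwise agreement into agreement on a whole neighborhood. The only technical care needed is the exclusion of lines of curvature in Proposition \ref{p1.1} (and the vanishing of $\cos\phi\,\sin\phi$ in case b)): this is harmless because the non-principal directions are dense and the principal curvatures are continuous, and the hypothesis that $M$ has no umbilic points guarantees $\kappa_1\neq\kappa_2$, so that $[\kappa_1:\kappa_2]$ is everywhere defined and the factor $\cos\phi\,\sin\phi$ may be divided out.
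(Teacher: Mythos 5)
Your proof is correct and follows essentially the same route as the paper: reduce to condition C1 (resp.\ constancy of $\kappa_1-\kappa_2$) along each non-principal isogonal line via Proposition \ref{p1.1} and the identity $\tau_g=(\kappa_1-\kappa_2)\cos\phi\sin\phi$, then globalize by noting that isogonal lines through a common non-umbilic point must share the value there and invoking the isogonal flow of Propositions \ref{p1.a}--\ref{p1.d} together with continuity and connectedness. Your write-up merely spells out in more detail the globalization step that the paper dispatches as ``a standard continuity argument,'' and adds the harmless extra care about lines of curvature.
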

\begin{proof}
$a$) From Proposition \ref{p1.1}, we only need to show the converse part. If $\gamma$ is an  isogonal line (not a line of curvature) such that $c\tau_g+d\kappa_n=0$, for certain constants $c$ and $d$ with $c^2+d^2\neq0$, then $\gamma$ satisfies condition \emph{C1} of Proposition \ref{p1.1}. On the other hand, if two isogonal lines $\g_1$ and $\g_2$ in $M$ (not lines of curvature) intersect at a point, then condition \emph{C1} for $\g_1$ is equal to the condition \emph{C1} for $\g_2$. Using the isogonal flow and a standard continuity argument, the same condition \emph{C1} is valid over the whole surface, and so $M$ is an \emph{\CRPC--surface}.\par
\noindent $b$) This can be proved in a similar way to $a$).
\end{proof}

\section{Pseudo-geodesic lines}
\label{s:CCNS}\label{s:PGL}

Given a curve $\g$ in a surface $M$, the following facts are well known: 1) if $\g$ is geodesic and line of curvature, then $\g$ is a plane curve; 2) if $\g$ is geodesic and plane, then $\g$ is a line of curvature, \cite[pp. 211--212]{Spivak3}. However, if $\g$ is plane and line of curvature, then $\g$ is not necessarily a geodesic.
The following result, that can be seen as an extension of Lemma 15 in \cite[p. 212]{Spivak3}, is an easy consequence from the equation (\ref{kgkn}) and the relation (see \cite[p. 153]{DoCarmo76}):
\begin{equation}\label{taug}
\tau_\gamma =\tau_g+\theta'.
\end{equation}
\begin{prop}\label{equiv2}
Let $\gamma$ be an arc-length parametrized curve in $M$. If any two of the following conditions are satisfied then the third condition is also satisified:\vspace*{-\topsep}
\begin{enumerate}\itemsep-2pt\def\labelenumi{$\alph{enumi})$}
\item $\gamma$ is a plane curve.
\item $\gamma$ is a line of curvature.
\item $\gamma$ is a pseudo-geodesic line.
\end{enumerate}
\end{prop}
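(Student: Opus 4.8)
The plan is to translate each of the three conditions into the vanishing of a single scalar function along $\gamma$, after which the whole statement collapses to the additive identity (\ref{taug}). First I would record the three analytic translations. Because $\kappa_\gamma>0$, the Frenet equation $B_\gamma'=\tau_\gamma N_\gamma$ shows that the binormal is constant precisely when $\tau_\gamma\equiv0$, so (a) holds if and only if $\tau_\gamma\equiv0$. Next, from the expression $\tau_g=(\kappa_1-\kappa_2)\cos\phi\sin\phi$ in (\ref{kntg}), at a non-umbilic point $\tau_g=0$ forces $\cos\phi\sin\phi=0$, i.e. $T_\gamma$ is a principal direction; conversely a line of curvature has $\phi\in\{0,\pi/2\}$ and hence $\tau_g=0$. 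Thus (b) holds if and only if $\tau_g\equiv0$. Finally, the angle $\theta$ between $N_\gamma$ and $N$ is exactly the one entering (\ref{refdarboux})--(\ref{kgkn}), so by the definition of pseudo-geodesic line, (c) holds if and only if $\theta$ is constant, that is $\theta'\equiv0$.

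With these equivalences the proposition is immediate from the relation $\tau_\gamma=\tau_g+\theta'$ of (\ref{taug}): this single identity exhibits each of the three scalars as the difference of the other two. Concretely, if (a) and (b) hold then $\theta'=\tau_\gamma-\tau_g=0$, giving (c); if (a) and (c) hold then $\tau_g=\tau_\gamma-\theta'=0$, giving (b); and if (b) and (c) hold then $\tau_\gamma=\tau_g+\theta'=0$, giving (a). Hence any two of the conditions force the third.

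I expect the only genuine (and mild) difficulty to lie in the degeneracies of the auxiliary objects rather than in the algebra. The principal frame $\{E_1,E_2\}$ and the angle $\phi$ are undefined at umbilic points, so the identification in step (b) of the lines of curvature with the locus $\tau_g=0$ must be completed by a continuity argument across such points. Likewise the principal normal $N_\gamma$, and therefore $\tau_\gamma$ and $\theta$, require $\kappa_\gamma>0$; this is precisely the standing assumption on $\gamma$, which guarantees that $\tau_\gamma$, $\tau_g$ and $\theta'$ are simultaneously defined and that (\ref{kgkn}) legitimately introduces $\theta$. Once these standard points are granted, all of the content is carried by the single linear relation (\ref{taug}).
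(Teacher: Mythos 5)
Your proof is correct and takes essentially the same route the paper indicates: it reduces each of the three conditions to the vanishing of $\tau_\gamma$, $\tau_g$ and $\theta'$ respectively, and then reads the conclusion off the identity $\tau_\gamma=\tau_g+\theta'$ of (\ref{taug}), which is precisely the "easy consequence" the authors invoke without writing out. (A minor simplification for step (b): the equivalence of "line of curvature" with $\tau_g\equiv0$ follows directly from the Darboux equation $N'=-\kappa_n T_\gamma+\tau_g\,(JT_\gamma)$ together with Rodrigues' criterion, which avoids any special discussion of umbilic points.)
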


In the following we present several results connecting the concepts of isogonal line, pseudo-geodesic line and generalized helix.

\begin{prop}\label{p5.0}
Let $\g$ be a pseudo-geodesic line in a surface $M$, and assume that it is not an asymptotic line. Then $\g$ is a generalized helix if and only if $\kappa_n$ and $\tau_g$ are linearly dependent functions along $\g$.
\end{prop}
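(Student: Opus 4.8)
The plan is to combine the two identities that relate the Frenet data of $\g$ to its Darboux data, namely $\tau_\gamma = \tau_g + \theta'$ from (\ref{taug}) and $\kappa_n = \cos\theta\,\kappa_\gamma$ from (\ref{kgkn}), with the classical characterization of a generalized helix as a curve satisfying $m\kappa_\gamma + n\tau_\gamma = 0$ for some constants $m,n$ not both zero.

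First I would invoke the pseudo-geodesic hypothesis: by definition the angle $\theta$ between $N_\gamma$ and $N$ is constant along $\g$, hence $\theta' = 0$. Feeding this into (\ref{taug}) yields $\tau_\gamma = \tau_g$ along $\g$, so the Frenet torsion and the geodesic torsion coincide.

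Next I would extract the normal curvature. Since $\theta$ is constant, (\ref{kgkn}) reads $\kappa_n = \cos\theta\,\kappa_\gamma$ with $\cos\theta$ a constant. The assumption that $\g$ is not an asymptotic line means $\kappa_n$ is not identically zero; as $\kappa_\gamma > 0$, this forces $\cos\theta \neq 0$, and therefore $\kappa_\gamma = \kappa_n/\cos\theta$ is a nonzero constant multiple of $\kappa_n$ along $\g$.

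Substituting both relations into the generalized-helix condition gives the conclusion: $m\kappa_\gamma + n\tau_\gamma = 0$ becomes $(m/\cos\theta)\,\kappa_n + n\,\tau_g = 0$. Because $\cos\theta \neq 0$, a pair of constants $(m,n)$ is nonzero exactly when $(m/\cos\theta,\,n)$ is, so such an identity holds for suitable constants if and only if $\kappa_n$ and $\tau_g$ are linearly dependent functions along $\g$. This is precisely the asserted equivalence. The only point requiring care is the non-asymptotic hypothesis: it is exactly what guarantees $\cos\theta \neq 0$, and hence that the linear dependence of $\kappa_n$ and $\tau_g$ is genuine rather than the trivial one that would be forced if $\kappa_n$ vanished identically.
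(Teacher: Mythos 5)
Your proof is correct and follows essentially the same route as the paper: use the pseudo-geodesic hypothesis to get $\tau_\gamma=\tau_g$ and $\kappa_n=\cos\theta\,\kappa_\gamma$ with $\theta$ constant, then translate the generalized-helix relation $m\kappa_\gamma+n\tau_\gamma=0$ into linear dependence of $\kappa_n$ and $\tau_g$. You make explicit the point the paper leaves implicit --- that the non-asymptotic hypothesis guarantees $\cos\theta\neq0$ --- which is a welcome clarification but not a different argument.
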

\begin{proof}
Note that on a pseudo-geodesic line $\g$ ($\theta$ is constant) we have from (\ref{taug}) and (\ref{kgkn}) that $\tau_g=\tau_\g$ and $\kappa_n=\cos\theta\kappa_\g$. Therefore, $\kappa_\g$ and $\tau_\g$ are linearly dependent functions along $\g$ (i.e. $\g$ is a generalized helix) if and only if $\kappa_n$ and $\tau_g$ are linearly dependent functions along $\g$.
\end{proof}

\begin{prop}\label{p5.1}
Let $\g$ be an isogonal line in a surface $M$ which also is a pseudo-geodesic line, and assume that it is not a line of curvature nor an asymptotic line. Then $\g$ is a generalized helix if and only if $\kappa_1$ and $\kappa_2$ are linearly dependent functions along $\g$.
\end{prop}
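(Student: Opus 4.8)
The plan is to derive this statement directly by chaining the two equivalences already established, using the linear dependence of $\kappa_n$ and $\tau_g$ as the bridge between them. Since $\gamma$ is assumed to be a pseudo-geodesic line that is not an asymptotic line, Proposition~\ref{p5.0} applies and tells us that $\gamma$ is a generalized helix if and only if $\kappa_n$ and $\tau_g$ are linearly dependent functions along $\gamma$. On the other hand, since $\gamma$ is an isogonal line that is not a line of curvature, Proposition~\ref{p1.1} applies and yields the equivalence of conditions \emph{C1} and \emph{C2}, namely that $\kappa_1$ and $\kappa_2$ are linearly dependent along $\gamma$ if and only if $\tau_g$ and $\kappa_n$ are linearly dependent along $\gamma$.

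First I would verify that the hypotheses of the present proposition supply exactly what each of these two results requires. The assumption that $\gamma$ is a pseudo-geodesic and not an asymptotic line is precisely the hypothesis of Proposition~\ref{p5.0}, while the assumption that $\gamma$ is isogonal and not a line of curvature is precisely the hypothesis of Proposition~\ref{p1.1}. Thus the exclusion ``nor an asymptotic line'' is what legitimizes invoking Proposition~\ref{p5.0}, and the exclusion ``not a line of curvature'' is what legitimizes invoking Proposition~\ref{p1.1}; both exclusions are essential, and neither is redundant.

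With both equivalences in hand, the conclusion follows by transitivity: $\gamma$ is a generalized helix if and only if $\kappa_n$ and $\tau_g$ are linearly dependent, and the latter holds if and only if $\kappa_1$ and $\kappa_2$ are linearly dependent, whence $\gamma$ is a generalized helix if and only if $\kappa_1$ and $\kappa_2$ are linearly dependent. There is no genuine obstacle here beyond recognizing that the common notion of linear dependence of $\kappa_n$ and $\tau_g$ serves as the pivot linking the two previously proved characterizations; the only real care needed is the hypothesis bookkeeping noted above, ensuring that each invoked proposition is applied under exactly its own assumptions.
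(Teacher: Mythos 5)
Your proof is correct and follows essentially the same route as the paper: both reduce the statement to the equivalence of conditions \emph{C1} and \emph{C2} from Proposition~\ref{p1.1}, with the linear dependence of $\kappa_n$ and $\tau_g$ as the pivot. The only cosmetic difference is that the paper re-derives the forward half of Proposition~\ref{p5.0} by hand (writing $m\sec\theta\,\kappa_n+n\tau_g=0$ from $m\kappa_\g+n\tau_\g=0$) and cites Proposition~\ref{p5.0} only for the converse, whereas you invoke it symmetrically in both directions, which is cleaner.
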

\begin{proof}
If $\g$ is a generalized helix then $m\kappa_\g+n\tau_\g=0$ for certain constants $m$ and $n$, and so $m\sec\theta \kappa_n+n\tau_g=0$. Then from Proposition \ref{p1.1} we get that $\kappa_1$ and $\kappa_2$ are linearly dependent functions along $\g$.

To proof the converse, let us assume that $\kappa_1$ and $\kappa_2$ are linearly dependent functions along $\g$. From Proposition \ref{p1.1} we get that $\kappa_n$ and $\tau_g$ are linearly dependent functions along $\g$, and so from Proposition \ref{p5.0} we deduce $\g$ is a generalized helix.
\end{proof}

\begin{prop}\label{p5.2}
Let $\g$ be a pseudo-geodesic line in a surface $M$, $\g$ without umbilical points. If $\g$ is a generalized helix and the curvatures $\kappa_1$ and $\kappa_2$ are linearly dependent along $\g$, then $\g$ is an isogonal line in $M$.
\end{prop}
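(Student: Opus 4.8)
The plan is to reduce the statement to Proposition \ref{p1.2}, which guarantees that an umbilic-free curve satisfying both conditions \emph{C1} and \emph{C2} of Proposition \ref{p1.1} is automatically an isogonal line. Since $\g$ has no umbilical points by hypothesis, it suffices to verify that $\g$ fulfills \emph{C1} and \emph{C2} along its trace. Condition \emph{C1} — that $\kappa_1$ and $\kappa_2$ are linearly dependent along $\g$ — is precisely one of the assumptions, so nothing needs to be done there, and the whole argument comes down to producing \emph{C2}.

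To establish \emph{C2} — that $\tau_g$ and $\kappa_n$ are linearly dependent along $\g$ — I would exploit the pseudo-geodesic hypothesis. Since $\g$ is pseudo-geodesic, the angle $\theta$ is constant, hence $\theta'=0$, and equations (\ref{taug}) and (\ref{kgkn}) give $\tau_g=\tau_\g$ and $\kappa_n=\cos\theta\,\kappa_\g$. Because $\g$ is a generalized helix, there are constants $m,n$, not both zero, with $m\kappa_\g+n\tau_\g=0$. If $\g$ is not asymptotic (i.e.\ $\cos\theta\neq0$), substituting $\kappa_\g=\sec\theta\,\kappa_n$ and $\tau_\g=\tau_g$ yields $m\sec\theta\,\kappa_n+n\tau_g=0$, a nontrivial linear relation between $\kappa_n$ and $\tau_g$; this is exactly the content of Proposition \ref{p5.0}. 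If instead $\g$ is asymptotic, then $\kappa_n=\cos\theta\,\kappa_\g\equiv0$, so $\kappa_n$ and $\tau_g$ are trivially dependent. In either case \emph{C2} holds.

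With both \emph{C1} and \emph{C2} in hand and $\g$ umbilic-free, Proposition \ref{p1.2} applies directly and forces $\g$ to be an isogonal line, which finishes the argument. I do not expect a serious obstacle here: the proof is essentially a bookkeeping combination of Propositions \ref{p5.0} and \ref{p1.2}. The only point that requires any care is the degenerate asymptotic case $\cos\theta=0$, where the substitution $\kappa_\g=\sec\theta\,\kappa_n$ breaks down and Proposition \ref{p5.0} does not apply; there one must observe separately that $\kappa_n\equiv0$ makes \emph{C2} automatic, so the conclusion survives in that boundary situation as well.
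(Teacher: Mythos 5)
Your proof is correct and takes essentially the same route as the paper's: in the non-asymptotic case you combine Proposition \ref{p5.0} (to get condition \emph{C2}) with Proposition \ref{p1.2}, and you treat the asymptotic case separately, exactly as the paper does. The only cosmetic difference is that in the asymptotic case the paper writes the two relations $\cos^2\phi\,\kappa_1+\sin^2\phi\,\kappa_2=0$ and $a\kappa_1+b\kappa_2=0$ and concludes directly that $\phi$ is constant, whereas you reach the same conclusion by observing that $\kappa_n\equiv0$ makes \emph{C2} trivially true and then invoking Proposition \ref{p1.2}, whose proof is that very computation.
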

\begin{proof}
If $\g$ is not an asymptotic line, then the result follows from Proposition \ref{p1.2}. In the case where $\g$ is an asymptotic line, then along $\g$ we have
\[
\cos^2\phi\ \kappa_1+\sin^2\phi\ \kappa_2=0,\quad a\kappa_1+b\kappa_2=0,
\]
for certain constants $a$ and $b$, with $a^2+b^2\neq0$. Then $\phi$ is necessarily a constant.
\end{proof}

\begin{prop}\label{hg1}
Let $\gamma$ be a generalized helix, with axis $V_\gamma$, in a surface $M$. Then
$\gamma$ is a pseudo-geodesic line in $M$ if and only if $\<V_\gamma,N|_\gamma\>$ is constant.
\end{prop}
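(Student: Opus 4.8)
The plan is to decompose the fixed axis $V_\g$ in the Frenet frame of $\g$ and then read off $\pin{V_\g}{N}$ directly from the relations (\ref{refdarboux}). First I would exploit that $\g$ is a generalized helix: by definition the function $\pin{T_\g}{V_\g}$ is constant, so differentiating it and using the first Frenet equation in (\ref{FS-eq}) together with $\kappa_\g>0$ yields $\pin{N_\g}{V_\g}=0$. Hence $V_\g$ lies in the plane spanned by $T_\g$ and $B_\g$, and I can write $V_\g=a\,T_\g+b\,B_\g$, where $a=\pin{T_\g}{V_\g}$ and $b=\pin{B_\g}{V_\g}$ along $\g$.

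Next I would check that $a$ and $b$ are in fact constants. Differentiating $V_\g=a\,T_\g+b\,B_\g$ and substituting (\ref{FS-eq}) gives $0=(a\kappa_\g+b\tau_\g)\,N_\g+a'\,T_\g+b'\,B_\g$; since $\{T_\g,N_\g,B_\g\}$ is an orthonormal frame, this forces $a'=b'=0$ and simultaneously recovers the familiar helix relation $a\kappa_\g+b\tau_\g=0$. In particular $b\neq0$, for otherwise that relation would read $a\kappa_\g=0$, whence $a=0$ because $\kappa_\g>0$, contradicting $V_\g\neq0$.

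With the constant decomposition in hand the statement reduces to a one-line computation. Since $T_\g$ is tangent to $M$ we have $\pin{T_\g}{N}=0$, while the third line of (\ref{refdarboux}) gives $\pin{B_\g}{N}=\sin\theta$; therefore
\[
\pin{V_\g}{N}=a\,\pin{T_\g}{N}+b\,\pin{B_\g}{N}=b\,\sin\theta,
\]
with $b$ a nonzero constant. Thus $\pin{V_\g}{N|_\g}$ is constant if and only if $\sin\theta$ is constant, and the forward implication (pseudo-geodesic, i.e.\ $\theta$ constant, forces $\pin{V_\g}{N|_\g}$ constant) is then immediate.

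The only delicate point, which I regard as the main obstacle, is the converse step of passing from ``$\sin\theta$ constant'' back to ``$\theta$ constant''. I would argue by continuity: differentiating $\sin\theta=c$ gives $\cos\theta\,\theta'=0$, so $\theta'=0$ on the open set where $\cos\theta\neq0$, while the exceptional set $\theta=\pi/2$ is covered by continuity; equivalently, $\theta$ takes values in a discrete subset of the circle and a continuous function into a discrete set is constant on the interval of definition. This gives $\theta$ constant, hence $\g$ pseudo-geodesic, completing the equivalence.
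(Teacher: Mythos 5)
Your proof is correct and follows essentially the same route as the paper's: write the axis as $V_\gamma=a\,T_\gamma+b\,B_\gamma$ with constant coefficients (the paper uses $\cos\psi\,T_\gamma+\sin\psi\,B_\gamma$) and read off $\<V_\gamma,N|_\gamma\>=b\sin\theta$ from (\ref{refdarboux}). You simply supply details the paper leaves implicit, namely that $b\neq0$ and that ``$\sin\theta$ constant'' forces ``$\theta$ constant'', both of which you handle correctly.
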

\begin{proof}
If $\gamma$ is a generalized helix with axis $V_\gamma=\cos\psi\,T_\gamma+\sin\psi\,B_\gamma$, then the quotient $\tau_\gamma/\kappa_\gamma=-\cot\psi$ is constant. On the other hand, from (\ref{refdarboux}) we have $\<V_\gamma,N|_\gamma\>=\sin\psi\,\sin\theta$, which leads to the result.
\end{proof}
\bigskip

Let $M$ and $\ol{M}$ be two regular surfaces that intersect transversally (i.e. whenever $p\in M\cap\ol{M}$ we have $T_pM\neq T_p\ol{M}$), then it is well known that $C=M\cap\ol{M}$ is a regular curve. Let $\g(s)$ be an arc-length parametrization of $C$ and denote by $\xi(s)\in(0,\pi)$ the angle between $N(\g(s))$ and $\ol{N}(\g(s))$, where, as usual, a bar over a function or vector field denotes that we are considering the curve within the surface $\ol M$. A classical result states that if $M$ and $\ol{M}$ intersect with constant angle along a line of curvature of $M$, then the curve of intersection is also a line of curvature of $\ol{M}$. In 1846, Joachimsthal proved a special case of this theorem, \cite{Joachimsthal}, while the general case was presented by Bonnet in 1853, \cite{Bonnet}.

It is easy to see that
\begin{equation}\label{*}
\xi(s)=\e(\bar\theta(s)-\theta(s)),\quad \e\in\{\pm1\},
\end{equation}
where $\theta(s)$ (resp. $\bar\theta(s)$) is the angle between the vectors $N_\gamma(s)$ and $N(\gamma(s))$ (resp. $\ol{N}(\gamma(s))$). From here, we have $\xi'(s)=\e(\tau_g(s)-\ol{\tau}_g(s))$.
If we assume that $\gamma$ is of constant geodesic torsion $\tau_g$ in $M$, then $\gamma$ is of constant geodesic torsion $\ol{\tau}_g$ in $\ol M$ (with $\tau_g=\ol{\tau}_g$) if and only if $M$ and $\ol{M}$ intersect with constant angle.

We finish with a result of same kind as Joachimsthal theorem, which is a direct consequence of (\ref{*}).

\begin{prop}\label{JoachimsthalEx2}
Let $\gamma(s)$ be an arc-length parame\-tri\-zation of the intersection curve of two surfaces $M$ and $\ol M$ that intersect transversally, and assume that $\gamma$ is a pseudo-geodesic line in $M$. Then $\gamma$ is a pseudo-geodesic line in $\ol M$ if and only if $M$ and $\ol{M}$ intersect with constant angle.
\end{prop}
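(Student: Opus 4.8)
The plan is to read off each of the three properties in the statement as a condition on the angle functions appearing in (\ref{*}), and then to exploit that the pseudo-geodesic hypothesis on $M$ already freezes one of them.

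First I would translate the three properties into the language of angles. By the definition of a pseudo-geodesic line, $\g$ is a pseudo-geodesic line in $M$ exactly when $\theta(s)$ is constant, and $\g$ is a pseudo-geodesic line in $\ol M$ exactly when $\ol\theta(s)$ is constant. Likewise, $M$ and $\ol M$ intersecting with constant angle along $\g$ means precisely that $\xi(s)$ is constant. Thus the hypothesis reads ``$\theta$ constant'', and the equivalence to be proved becomes ``$\ol\theta$ constant $\iff$ $\xi$ constant''.

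Next I would invoke relation (\ref{*}), namely $\xi(s)=\e\,(\ol\theta(s)-\theta(s))$ with $\e\in\{\pm1\}$ a fixed sign (constant along $\g$ by continuity, since transversality forces $\xi\in(0,\pi)$). As $\theta$ is constant by hypothesis, the term $\e\,\theta(s)$ is a constant, so $\xi(s)$ differs from $\e\,\ol\theta(s)$ only by an additive constant. Hence $\xi$ is constant if and only if $\ol\theta$ is constant, which is exactly the desired equivalence. Equivalently, differentiating gives $\xi'(s)=\e\,\ol\theta{}'(s)$, so $\xi'\equiv 0$ iff $\ol\theta{}'\equiv 0$.

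The only substantive ingredient is relation (\ref{*}) itself, which is already established in the excerpt; its geometric content is that $N_\g$, $N$ and $\ol N$ all lie in the normal plane to $\g$ (each being orthogonal to the common tangent $T_\g$ of the intersection curve), so the angle $\xi$ between the two surface normals is the signed difference of the angles $\theta$ and $\ol\theta$ that they make with $N_\g$. Once (\ref{*}) is in hand there is no real obstacle: the proposition is a one-line consequence of the fact that fixing $\theta$ decouples $\xi$ and $\ol\theta$ up to an additive constant.
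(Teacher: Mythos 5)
Your proposal is correct and follows essentially the same route as the paper, which likewise presents the proposition as a direct consequence of the relation $\xi(s)=\e(\ol{\theta}(s)-\theta(s))$ in (\ref{*}): once $\theta$ is constant, $\xi$ and $\ol{\theta}$ differ only by an additive constant, so one is constant exactly when the other is. Nothing further is needed.
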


\subsection{Differential equations of pseudo-geodesic lines}

In this subsection, the angle $\theta$ between $N$ and $N_\g$ is not necessarily a constant. Let $X(t,z)$ be an orthogonal parametrization in a surface $M$, and denote by $\{E,F=0,G\}$ and $\{e,f,g\}$ the coefficients of the first and second fundamental forms, respectively.
Let $\g(s)=X(t(s),z(s))$ be an arc-length parametrized curve, then its unit tangent $T_\g=t'X_t+z'X_z$.
By taking derivative here we deduce the following ODE system
\begin{align*}
 t''+\Gamma_{11}^1\,(t')^2+2\Gamma_{12}^1\, t'z'+\Gamma_{22}^1\,(z')^2 = & -\sin\theta\,z'\sqrt{G/E}\,\kappa_\g, \\
 z''+\Gamma_{11}^2\,(t')^2+2\Gamma_{12}^2\, t'z'+\Gamma_{22}^2\,(z')^2 = & \sin\theta\,t'\sqrt{E/G}\,\kappa_\g,\\
 e(t')^2+2ft'z'+g(z')^2= & \cos\theta\,\kappa_\g,
\end{align*}
where as usual the $\Gamma_{ij}^k$'s denote the Christoffel symbols. If $\theta=\theta(s)\neq\pm\pi/2$, then the above equations lead to
\begin{align}
 t''+&(t')^2\,(\Gamma_{11}^1+\tan\theta\,z'\sqrt{G/E}\,e)+2 t'z'\,(\Gamma_{12}^1+\tan\theta\,z'\sqrt{G/E}\,f)+\nonumber\\
 &+(z')^2\,(\Gamma_{22}^1+\tan\theta\,z'\sqrt{G/E}\,g) =  0,\label{ec th cpnc1} \\
 z''+&(t')^2\,(\Gamma_{11}^2-\tan\theta\,t'\sqrt{E/G}\,e)+2 t'z'\,(\Gamma_{12}^2-\tan\theta\,t'\sqrt{E/G}\,f)+\nonumber\\
 &+(z')^2\,(\Gamma_{22}^2-\tan\theta\,t'\sqrt{E/G}\,g) = 0.\label{ec th cpnc2}
\end{align}

Conversely, if $\gamma(s)=X(t(s),z(s))$ satisfies equations (\ref{ec th cpnc1}) and (\ref{ec th cpnc2}) for a function $\bar{\theta}$, define the function $\bar{\kappa}=\sec\bar{\theta}\,(e(t')^2+2ft'z'+g(z')^2)$, and then we have
\begin{align*}
 t''+\Gamma_{11}^1\,(t')^2+2\Gamma_{12}^1\, t'z'+\Gamma_{22}^1\,(z')^2 = & -\sin\bar\theta\,z'\sqrt{G/E}\,\bar{\kappa},\\
 z''+\Gamma_{11}^2\,(t')^2+2\Gamma_{12}^2\, t'z'+\Gamma_{22}^2\,(z')^2 = & \sin\bar\theta\,t'\sqrt{E/G}\,\bar{\kappa}.
\end{align*}
By derivating $T_\g$ we get $\kappa_\gamma\,N_\gamma=t''X_t+z''X_z+t'^2X_{tt}+2t'z'X_{tz}+z'^2X_{zz}$, and then we deduce
\begin{align*}
  \kappa_\gamma\<N_\gamma,X_t\>= & (t''+\Gamma_{11}^1\,(t')^2+2\Gamma_{12}^1\, t'z'+\Gamma_{22}^1\,(z')^2)\,E, \\
  \kappa_\gamma\<N_\gamma,X_z\>=  & (z''+\Gamma_{11}^2\,(t')^2+2\Gamma_{12}^2\, t'z'+\Gamma_{22}^2\,(z')^2)\,G, \\
  \kappa_\gamma\<N_\gamma,N\>=  & e(t')^2+2ft'z'+g(z')^2.
\end{align*}
From these three equations, and bearing (\ref{refdarboux}) in mind, we get
\begin{equation*}
  \kappa_\gamma\,\sin{\theta} =\bar{\kappa}\,\sin\bar\theta, \quad\text{and}\quad  \kappa_\gamma\,\cos{\theta} =\bar{\kappa}\,\cos\bar\theta,
\end{equation*}
from which we deduce $\bar{\theta}=\theta$ and $\bar{\kappa}=\kappa_\gamma$.

In conclusion, we have shown the following result (note that when $\theta=0$ we recover the equations of geodesic curves).

\begin{prop}
Let $X(t,z)$ be an orthogonal parametrization in a surface $M$. A curve $\g(s)=X(t(s),z(s))$ is a pseudo-geodesic line if and only if equations (\ref{ec th cpnc1}) and (\ref{ec th cpnc2}) are satisfied.
\end{prop}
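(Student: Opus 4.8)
The plan is to prove both implications by decomposing the acceleration $\kappa_\g N_\g$ of the arc-length parametrized curve $\g$ into its tangential and normal parts relative to $M$, and then reading off the scalar equations in the basis $\{X_t,X_z,N\}$. Since the parametrization is orthogonal, $\theta$ in equations (\ref{ec th cpnc1})--(\ref{ec th cpnc2}) will play the role of the (constant) pseudo-geodesic angle, exactly as the parenthetical remark about $\theta=0$ recovering the geodesic equations suggests.

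First I would differentiate $T_\g=t'X_t+z'X_z$ and expand $X_{tt},X_{tz},X_{zz}$ through the Gauss formulas, collecting the Christoffel symbols $\Gamma_{ij}^k$ of the orthogonal parametrization. Projecting $\kappa_\g N_\g$ onto $X_t$, $X_z$ and $N$ produces three scalar identities; using $F=0$, $|X_t|^2=E$, $|X_z|^2=G$, and the relations $\kappa_g=\sin\theta\,\kappa_\g$, $\kappa_n=\cos\theta\,\kappa_\g$ from (\ref{kgkn}) (together with the coordinate expression of $JT_\g$), the two tangential identities carry the factors $-\sin\theta\,z'\sqrt{G/E}\,\kappa_\g$ and $\sin\theta\,t'\sqrt{E/G}\,\kappa_\g$, while the normal identity reads $e(t')^2+2ft'z'+g(z')^2=\cos\theta\,\kappa_\g$. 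These three identities hold for \emph{any} curve, $\theta=\theta(s)$ being its actual angle.

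For the forward implication I would assume $\g$ is a pseudo-geodesic line, so $\theta$ is constant with $\theta\neq\pm\pi/2$, and eliminate $\kappa_\g$ between the tangential and normal identities via $\kappa_\g=\sec\theta\,(e(t')^2+2ft'z'+g(z')^2)$; this yields precisely (\ref{ec th cpnc1}) and (\ref{ec th cpnc2}) with that constant $\theta$. For the converse I would suppose (\ref{ec th cpnc1})--(\ref{ec th cpnc2}) hold for a constant value $\bar\theta$, set $\bar\kappa=\sec\bar\theta\,(e(t')^2+2ft'z'+g(z')^2)$, and rewrite the hypotheses as tangential identities with right-hand sides $-\sin\bar\theta\,z'\sqrt{G/E}\,\bar\kappa$ and $\sin\bar\theta\,t'\sqrt{E/G}\,\bar\kappa$. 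Comparing these, along with the normal identity $e(t')^2+2ft'z'+g(z')^2=\cos\bar\theta\,\bar\kappa$, against the three geometric identities of the actual curve gives $\kappa_\g\sin\theta=\bar\kappa\sin\bar\theta$ and $\kappa_\g\cos\theta=\bar\kappa\cos\bar\theta$; squaring and adding forces $\kappa_\g=\bar\kappa$ and hence $\theta=\bar\theta$, so $\theta$ is constant and $\g$ is a pseudo-geodesic line.

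The main obstacle I anticipate lies in the converse: one must argue that the constant $\bar\theta$ occurring in the differential equations is forced to coincide with the intrinsic, a priori variable, angle function $\theta(s)$ of $\g$, rather than remaining a free parameter. This is exactly what the pair $\kappa_\g\sin\theta=\bar\kappa\sin\bar\theta$, $\kappa_\g\cos\theta=\bar\kappa\cos\bar\theta$ resolves through the Pythagorean identity, which simultaneously pins down $\bar\kappa=\kappa_\g$. Care is also needed to exclude the degenerate regime $\theta=\pm\pi/2$ (asymptotic lines), where $\tan\theta$ and $\sec\theta$ are undefined, and to keep $\kappa_\g\neq0$ so that $N_\g$, and hence $\theta$, are well defined along $\g$.
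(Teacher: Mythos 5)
Your proposal follows essentially the same route as the paper: differentiate $T_\g=t'X_t+z'X_z$, project $\kappa_\g N_\g$ onto $X_t$, $X_z$, $N$ to get the three scalar identities, eliminate $\kappa_\g$ via $\sec\theta\,(e(t')^2+2ft'z'+g(z')^2)$ for the forward direction, and for the converse define $\bar\kappa$ and compare to obtain $\kappa_\g\sin\theta=\bar\kappa\sin\bar\theta$, $\kappa_\g\cos\theta=\bar\kappa\cos\bar\theta$, whence $\bar\theta=\theta$. This matches the paper's argument in both structure and detail, including the exclusion of the asymptotic case $\theta=\pm\pi/2$.
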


\section{The main result}
\label{s:MR}

The goal of this section is to show that helix surfaces and the Enneper surface are the only ones whose isogonal lines are generalized helices and pseudo-geodesic lines. First, we will see that helix surfaces and the Enneper surface verify that property.

Before starting with the examples, we recall the well-known formula of Liouville (see \cite[p. 253]{DoCarmo76}):
\begin{equation}
\kappa_g = \phi'+\cos\phi\,\kappa_{g_1}+\sin\phi\,\kappa_{g_2}, \label{kg}
\end{equation}
where $\kappa_{g_1}$ and $\kappa_{g_2}$ denote the geodesic curvatures of the lines of curvature.

\begin{example}\label{ex1}
A surface $M$ in $\R3$ is a \emph{helix surface} if there exists an unitary vector $V$ such that for each point $q\in M$ the angle between $V$ and $T_qM$ is constant, \cite{DSRH09}. A helix surface can be locally parametrized as follows (\cite{DSRH09,LO16a}),
\begin{equation}\label{ParamtSupHelice}
X(t,z)=\beta(t)+z\left(\cos\varphi\,N_\beta(t)+\sin\varphi\, V\right),
\end{equation}
where $\varphi$ is a constant angle and $\beta$ a curve in a plane orthogonal to $V$. The geodesics of helix surfaces are slant helices, and they are characterized by the fact that their unit normal vector field makes a constant angle with the vector $V$. The coordinate curves of $X(t,z)$ are lines of curvature, and we have the following formulas
\[
\kappa_{g_1}=\frac{\cos\varphi\;\kappa_\beta}{1-z\cos\varphi\;\kappa_\beta},\qquad \kappa_{g_2}=0, \qquad\kappa_{1}=\frac{-\sin\varphi\;\kappa_\beta}{1-z\cos\varphi\;\kappa_\beta},\qquad \kappa_{2}=0.
\]
If $\gamma$ is an isogonal line, from (\ref{kgkn}), (\ref{kntg}), (\ref{kg}) and Proposition \ref{p5.1} we easily get that $\g$ is a generalized helix and a pseudo-geodesic line.

Moreover, cylinders are the only helix surfaces whose  isogonal lines are generalized helices and geodesics.
\hfill$\Box$
\end{example}

\begin{example}\label{ex2}
The usual parametrization of the \emph{Enneper surface}, \cite[p. 205]{DoCarmo76}, whose coordinate curves are lines of curvature, is given by
\begin{equation}\label{ParamtEnneper}
X(t,z)=\Big(t-\frac{t^3}{3}+tz^2,z-\frac{z^3}{3}+zt^2,t^2-z^2\Big).
\end{equation}
A straightforward computation yields
\[
\kappa_{g_1}=\frac{-2 z}{(1+t^2+z^2)^2},\quad \kappa_{g_2}=\frac{2t}{(1+t^2+z^2)^2},\quad \kappa_{1}=\frac{2}{(1+t^2+z^2)^2},\quad \kappa_{2}=\frac{-2}{(1+t^2+z^2)^2}.
\]
A curve $\gamma(s)=X(t(s),z(s))$ is an isogonal line if and only if $t'(s)\sqrt{E}=\cos\phi$ and $z'(s)\sqrt{G}= \sin\phi$, for a constant $\phi$, or equivalently $z'(s)=\tan\phi\,t'(s)$. Hence, the  isogonal lines are given by  $\gamma(s)=X(t(s),m\,t(s)+n)$, for certain constants $m=\tan\phi$ and $n$. In this case, from (\ref{kgkn}), (\ref{kntg}) and (\ref{kg}) 
we get
\[
\tan\theta= \frac{-n\cos\phi}{\cos^2\phi-\sin^2\phi},
\]
showing that $\gamma$ is a pseudo-geodesic line (without loss of generality we can assume that $\phi\neq\pi/4$; otherwise, $\g$ is a line of curvature). Furthermore, from Proposition \ref{p5.1} we also obtain that $\g(s)$ is a generalized helix.

In summary, if $\gamma(s)$ is an isogonal line in the Enneper surface, then $\g$ satisfies the following properties: (i) $\gamma$ is a pseudo-geodesic line; (ii) $\g$ is a generalized helix; (iii) $\g(s)=X(\alpha(s))$, where $\alpha$ is a straight line in the plane.
\hfill$\Box$
\end{example}

In the following two examples we show surfaces which do not verify the above properties, but which will be used in the proof of the theorem.

\begin{example}\label{ex3}
It is shown in \cite[p. 96]{Kuhnel06} that a revolution surface which is a \CRPC--surface can be parametrized as
\[
X(t,z)=(t\,\cos z,t\,\sin z,\varepsilon \int_1^t u^c(1-u^{2c})^{-1/2}du),
\]
where $\varepsilon=\pm 1$ and $\kappa_1=c\kappa_2\neq 0$.
The principal directions and the unit normal are given by
\begin{align*}
E_1 &= (\sqrt{1-t^{2c}}\,\cos z,\sqrt{1-t^{2c}}\,\sin z,\varepsilon t^c)\\
E_2 &= (-\sin z,\cos z,0),\\
N &=(-\varepsilon t^c\,\cos z,-\varepsilon t^c\,\sin z,\sqrt{1-t^{2c}}).
\end{align*}
Let $\gamma(s)=X(t(s),z(s))$ be an  isogonal line which is not a line of curvature. Then we have
\begin{equation}\label{cpcSR}
t'= \cos\phi\sqrt{1-t^{2c}} \qquad\text{and}\qquad  z'\,t= \sin\phi,
\end{equation}
and therefore
\[
T_\gamma=(\cos\phi\,\sqrt{1-t^{2c}}\,\cos z-\sin\phi\,\sin z,\cos\phi\,\sqrt{1-t^{2c}}\,\sin z+\sin\phi\,\cos z,\varepsilon \cos\phi\,t^c).
\]
From here and (\ref{cpcSR}) we get
\begin{equation}\label{cpcSRcurv}
\kappa^2_\gamma=\frac{1}{t^2}\left(t^{2c}(c^2\cos^4\phi+(2c-1)\cos^2\phi\sin^2\phi)+\sin^2\phi\right).
\end{equation}
If $\gamma$ were a pseudo-geodesic line, then there would exist a constant $\theta$ such that
\begin{equation}\label{cpcSRcurv1}
\kappa_\gamma\cos\theta=\<\kappa_\gamma N_\gamma,N\>=\e(\sin^2\phi+c\cos^2\phi)t^{c-1}.
\end{equation}
Then from (\ref{cpcSRcurv}) and (\ref{cpcSRcurv1}) we deduce that either $\g$ is a line of curvature or $t(s)$ is a constant function, which is a contradiction. In conclusion, we have proved that on this surface there exist isogonal lines which are not pseudo-geodesic lines.\hfill$\Box$
\end{example}

\begin{example}\label{ex4}
According to \cite[pp. 164--166]{Nitsche89} (see also \cite[p. 308]{Eis09}), the catenoid, the Enneper surface and the Bonnet surfaces are the only nonplanar minimal surfaces with plane lines of curvature. The usual parametrization of a Bonnet surface, whose coordinate curves are lines of curvature, is given by
\begin{equation}\label{supbonnet}
  X(t,z)=\left(\frac{1}{\sqrt{1-a^2}}(a t+\sin t\cosh z),\frac{1}{\sqrt{1-a^2}}(z+ a\cos t\sinh z), \cos t\cosh z\right),
\end{equation}
for $0<a<1$. It is not difficult to see that
\[
\kappa_{g_1}=\frac{-\sqrt{1-a^2}\sinh z}{(a\cos t+\cosh z)^2},\quad\kappa_{g_2}=\frac{-a\sqrt{1-a^2}\sin t}{(a\cos t+\cosh z)^2},\quad\kappa_{2}=-\kappa_1=\frac{1-a^2}{(a\cos t+\cosh z)^2}.
\]

Let us suppose that $\gamma$ is an  isogonal line and a pseudo-geodesic line. From (\ref{kgkn}), (\ref{kntg}) and (\ref{kg}), and reasoning as in Example \ref{ex2}, we can write
\[
A\,\sinh z+B\,\sin t+C=0,
\]
for certain constants $A$, $B$ and $C$. Derivating twice here, and taking into account that $z'=\tan\phi\; t'$, we get a contradiction. In conclusion, as in the above example, we have proved that on this surface there exist  isogonal lines which are not pseudo-geodesic lines.
\hfill$\Box$
\end{example}

Now we can state and prove the main result.

\begin{theo}\label{teoprinc}\label{MR}
Let $M$ be a nonplanar connected surface in $\R3$. Then the  isogonal lines are generalized helices and pseudo-geodesic lines if and only if $M$ is a helix surface or an open piece of the Enneper surface.
\end{theo}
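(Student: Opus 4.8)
\quad
The ``if'' direction is already in hand: Example~\ref{ex1} shows that every isogonal line of a helix surface is a generalized helix and a pseudo-geodesic line, and Example~\ref{ex2} does the same for the Enneper surface. So I focus on the converse. Assume that every isogonal line of $M$ is at once a generalized helix and a pseudo-geodesic line, and work on the open set of non-umbilic points (on a nonplanar surface of the type that will emerge the umbilics are planar points, so the conclusion will extend by connectedness and continuity). The first goal is to prove that $M$ is a \CRPC--surface. A generic isogonal line $\g$ is neither a line of curvature nor an asymptotic line, so Proposition~\ref{p5.1} gives that $\kappa_1$ and $\kappa_2$ are linearly dependent along $\g$; plugging this into Proposition~\ref{prop3}(a) --- that is, using the isogonal flow and the continuity argument employed there --- produces a single relation $a\kappa_1+b\kappa_2=0$, with constants $a,b$ not both zero, holding on all of $M$.

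The next step records a structural consequence. The two families of lines of curvature are themselves isogonal lines (the constant angles being $\phi=0$ and $\phi=\pi/2$), hence pseudo-geodesic lines by hypothesis, hence plane curves by Proposition~\ref{equiv2}. Thus $M$ is a \CRPC--surface whose two families of lines of curvature are both planar. To exploit the remaining freedom I pass to a parametrization $X(t,z)$ by lines of curvature and use (\ref{kgkn}), (\ref{kntg}) and Liouville's formula (\ref{kg}) (with $\phi$ constant) to write, along the isogonal line of angle $\phi$, $\tan\theta=(\kappa_{g_1}\cos\phi+\kappa_{g_2}\sin\phi)/(\kappa_1\cos^2\phi+\kappa_2\sin^2\phi)$. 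In the non-flat case I set $\kappa_1=c\,\kappa_2$ with $c$ constant; the $\phi$-factor in the denominator is then constant, so the pseudo-geodesic condition $\theta'=0$ reduces to the requirement that $(\kappa_{g_1}\cos\phi+\kappa_{g_2}\sin\phi)/\kappa_2$ be constant along each isogonal line. Differentiating along the line, whose velocity is $\frac{\cos\phi}{\sqrt{E}}\,\partial_t+\frac{\sin\phi}{\sqrt{G}}\,\partial_z$, and letting $\phi$ range freely at each point, I separate the coefficients of $\cos^2\phi$, $\sin^2\phi$ and $\sin\phi\cos\phi$ to obtain a pointwise first-order system for $\kappa_{g_1}/\kappa_2$ and $\kappa_{g_2}/\kappa_2$, to be read together with the Codazzi--Mainardi equations.

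The argument then branches on the \CRPC--relation. The minimal case $\kappa_1+\kappa_2=0$ is the cleanest: a nonplanar minimal surface with planar lines of curvature is, by the Nitsche classification recalled in Example~\ref{ex4}, a catenoid, an Enneper surface or a Bonnet surface. The catenoid is a surface of revolution with $\kappa_1=-\kappa_2$ and is therefore excluded by the computation of Example~\ref{ex3} (which exhibits isogonal lines that fail to be pseudo-geodesic), and the Bonnet surfaces are excluded by Example~\ref{ex4}; hence $M$ is an open piece of the Enneper surface. In the flat branch one principal curvature vanishes identically, say $\kappa_2\equiv0$, so $M$ is a developable (ruled) surface; here I run the $\kappa_n=\kappa_1\cos^2\phi$ version of the reduction above and show that the pseudo-geodesic condition forces the Gauss map $N$ to make a constant angle with a fixed direction $V$, which is precisely the defining property of a helix surface and yields the local form (\ref{ParamtSupHelice}).

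The main obstacle is to eliminate the intermediate \CRPC--surfaces, those with $\kappa_1=c\,\kappa_2$, both principal curvatures nonzero and $c\neq0,-1$, since Example~\ref{ex3} only disposes of the surfaces of revolution among them. The plan is to combine the separated equations for $\kappa_{g_1}/\kappa_2$ and $\kappa_{g_2}/\kappa_2$ with the Codazzi--Mainardi relations $\kappa_{1,z}=\frac{E_z}{2E}(\kappa_2-\kappa_1)$ and $\kappa_{2,t}=\frac{G_t}{2G}(\kappa_1-\kappa_2)$ and with $\kappa_1=c\kappa_2$, and to show that the resulting system is overdetermined unless $c\in\{0,-1\}$ --- equivalently, that the planar curvature lines of such a surface must be coaxial, so that $M$ is a surface of revolution to which Example~\ref{ex3} applies. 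Carrying out this incompatibility check, and completing the identification of the developable solutions with the helix surfaces, is where essentially all the remaining work sits; the propositions of Sections~\ref{s:CCS} and~\ref{s:CCNS} reduce everything else to bookkeeping.
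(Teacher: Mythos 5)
Your proof has a genuine gap, and you essentially flag it yourself. The first half of your argument coincides with the paper's: the ``if'' direction via Examples~\ref{ex1} and~\ref{ex2}, the deduction that $M$ is a \CRPC--surface via Propositions~\ref{p5.1} (or \ref{p5.0}) and \ref{prop3}, and the deduction that the lines of curvature are planar via Proposition~\ref{equiv2}. The divergence comes at the classification step. The paper closes the argument by invoking the result of Kim and Kim \cite{Kim10} on surfaces with $\kappa_2=a\kappa_1+b$ and planar lines of curvature: such a surface is either flat (and then a helix surface) or nonflat (and then minimal or a surface of revolution); Examples~\ref{ex3} and~\ref{ex4} then eliminate everything but the Enneper surface. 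You instead propose to rederive this classification from scratch, but you never actually do it. For the intermediate \CRPC--surfaces ($\kappa_1=c\,\kappa_2$ with $c\neq 0,-1$) you state a \emph{plan} --- separate the coefficients of $\cos^2\phi$, $\sin^2\phi$, $\sin\phi\cos\phi$, combine with Codazzi--Mainardi, and ``show that the resulting system is overdetermined'' --- and then write that carrying this out ``is where essentially all the remaining work sits.'' The same is true of your flat branch: the claim that a developable surface with planar lines of curvature and pseudo-geodesic isogonal lines must have a Gauss map making a constant angle with a fixed direction is asserted, not proved, and it is not automatic (cones and tangent developables are flat but are not helix surfaces, so a real computation is needed to exclude them). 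A proof whose hardest two cases are left as announced intentions is not a proof.

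What you do complete is sound: the reduction to a \CRPC--surface with planar lines of curvature is correct and matches the paper, and your treatment of the minimal branch (Nitsche's classification from Example~\ref{ex4}, with the catenoid killed by Example~\ref{ex3} and the Bonnet surfaces by Example~\ref{ex4}) is exactly the paper's. To repair the argument with minimal effort you should either cite the classification in \cite{Kim10} as the paper does, or genuinely carry out the overdetermination analysis you sketch; as written, the converse direction of the theorem is not established.
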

\begin{proof}
Bearing examples \ref{ex1} and \ref{ex2} in mind, we only need to show the direct part.

Let $M$ be a connected surface such that any  isogonal line is a generalized helix and a pseudo-geodesic line. From Proposition \ref{equiv2}, $M$ has plane lines of curvature. On the other hand, if $\g$ is a pseudo-geodesic line and a generalized helix, then $\kappa_n$ and $\tau_g$ are linearly dependent functions along $\g$, and from Proposition \ref{prop3} we deduce that $M$ is a \emph{\CRPC}-surface.

In \cite{Kim10}, the authors study surfaces with $k_2=ak_1+b$ and plane lines of curvature, and show that (i) if $M$ is a flat surface then $M$ is a helix surface (also called a slant cylinder); (ii) if $M$ is nonflat then $M$ is a minimal surface or a surface of revolution. In case (ii), and bearing examples \ref{ex3} and \ref{ex4} in mind, we get that $M$ is an open piece of the Enneper surface. This concludes the proof.
\end{proof}

\section{Appendix: A special family of geodesics in the Enneper surface}
\label{s:App}

Another interesting property of the Enneper surface $E$ is the following: it is path-connected by generalized helices, that is, given two points $p_1$ and $p_2$ in $E$ there exists a generalized helix in $E$ connecting both points.
Indeed, let $p_1=X(t_1,z_1)$ and $p_2=X(t_2,z_2)$, with $t_1\neq t_2$, two points in $E$ and consider the curve $\gamma(t)=X(t,m\,t+n)$, with $m=(z_2-z_1)/(t_2-t_1)$ and $n=(t_2z_1-t_1z_2)/(t_2-t_1)$. Then $\g$ is an isogonal line in $E$ and so it is a generalized helix. In the case $t_1=t_2$, the points $p_1$ and $p_2$ are connected by a planar line of curvature. Some generalized helices are plotted in figure \ref{fig.ex1}.

\begin{figure}\centering
\includegraphics[height=8cm]{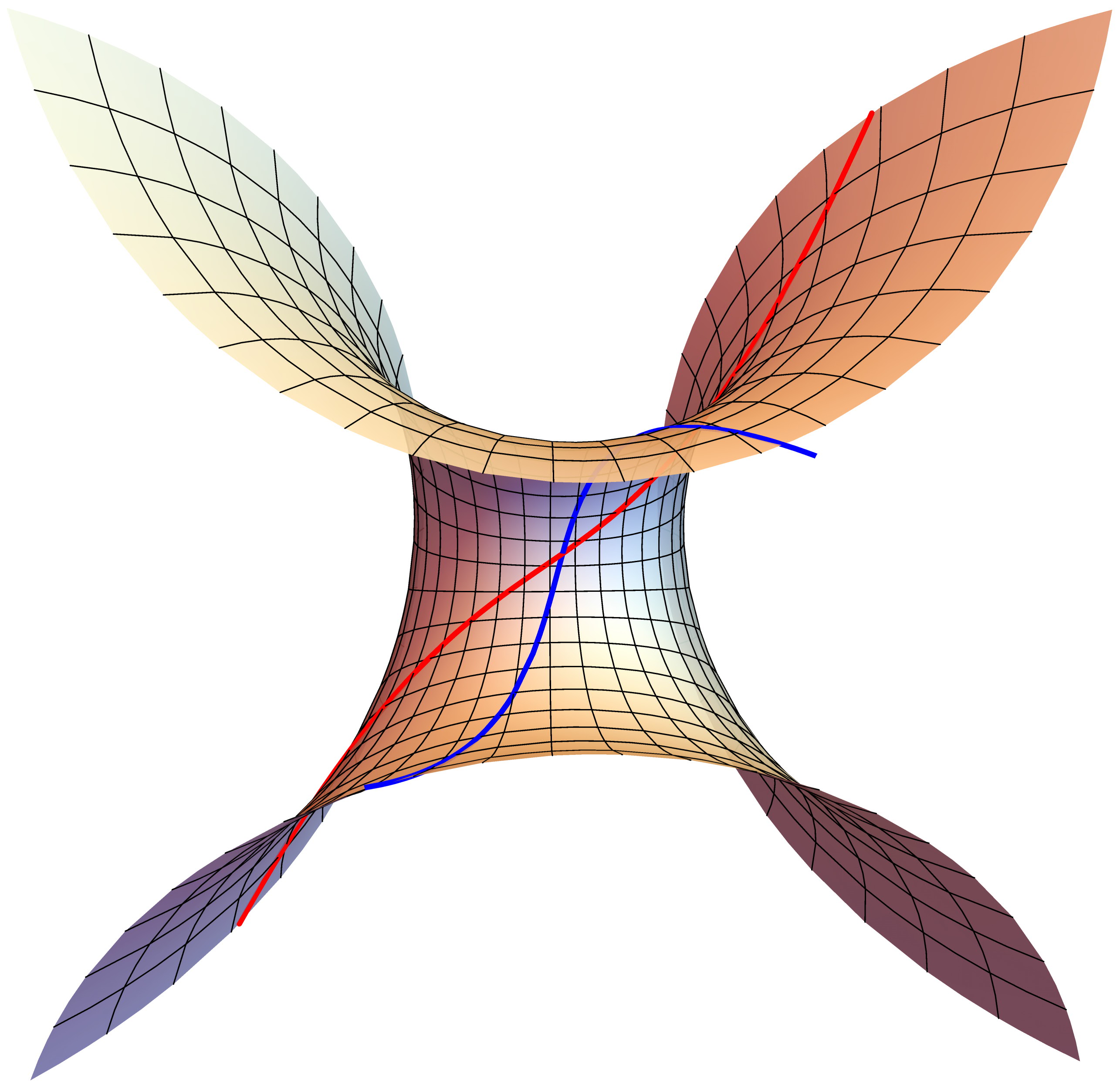}
\caption{\label{fig.ex1}The Enneper surface and two generalized helices passing through the origin.}
\end{figure}

It is easy to see that
\begin{align}\label{tanghgennep}
T_\gamma(t)= & \frac{\cos\phi}{1+t^2+(mt+n)^2}\Big(1+n^2+4mnt+(3m^2-1)t^2, \nonumber\\
   & m(1-n^2)+2n(1-m^2)t+m(3-m^2)t^2,-2mn+2(1-m^2)t\Big),
\end{align}
and the axis of $\g$ is given by
\[
 W=\frac{1}{\sqrt{1+m^2+n^2}}(m,1,-n).
\]
From (\ref{refdarboux}) we get
\[
\sin\psi\sin\theta=\<W,N|_\gamma\>=\frac{n}{\sqrt{1+m^2+n^2}},
\]
where $\tau_\g/\kappa_\g=-\cot\psi$, and so the  isogonal lines passing through the origin are geodesics. Hence, we have shown that the family of geodesics of the Enneper surface passing through the origin consists of the  isogonal lines passing through the origin, which are also generalized helices. This family can be described as
\begin{equation}\label{familiageodesicas}
\gamma_m(t)=\frac{1}{3}\Big(3t+(3m^2-1) t^3,m(3t-(m^2-3)t^3),3(1-m^2)t^2\Big),
\end{equation}
where $m=\tan\phi$.

\section*{Acknowledgements}

This research is part of the grant PID2021-124157NB-I00, funded by MCIN/ AEI/ 10.13039/ 501100011033/ ``ERDF A way of making Europe''. Also supported by ``Ayudas a proyectos para el desarrollo de investigación científica y técnica por grupos competitivos'', included in the ``Programa Regional de Fomento de la Investigación Científica y Técnica (Plan de Actuación 2022)'' of the Fundación Séneca-Agencia de Ciencia y Tecnología de la Región de Murcia, Ref. 21899/PI/22.

\end{document}